\theoremstyle{plain}
\newtheorem{theorem}                 {Theorem}      [section]
\newtheorem{proposition}  [theorem]  {Proposition}
\theoremstyle{definition}
\newtheorem{example}      [theorem]  {Example}
\newtheorem{remark}       [theorem]  {Remark}
\newtheorem{definition}   [theorem]  {Definition}
\numberwithin{equation}{section}
\def \R{{\mathbb R}}
\def \rn{{\mathbb R}}
\def \s{{\mathbb S}}
\def \cn{{\mathbb C}}
\def \zn{{\mathbb Z}}
\def \H{{\mathbb H}}
\def \hyp{\mathcal H}
\def \nab#1#2{\hbox{$\nabla$\kern -.3em\lower 1.0 ex
    \hbox{$#1$}\kern -.1 em {$#2$}}}
\def \SL2{\widetilde{\text{\bf SL}}_{2}(\rn)}
\def \SO#1{\text{\bf SO}(#1)}
\def \SU#1{\text{\bf SU}(#1)}
\def \Sp#1{\text{\bf Sp}(#1)}
\DeclareMathOperator{\Div}{div}
\numberwithin{equation}{section}
\begin{document}

\title[Biharmonic functions on spheres and hyperbolic spaces]
{Biharmonic functions on spheres \\  and hyperbolic spaces}


\author{Sigmundur Gudmundsson}

\address{Mathematics, Faculty of Science\\ University of Lund\\
Box 118, Lund 221\\
Sweden}
\email{Sigmundur.Gudmundsson@math.lu.se}

\begin{abstract}
We construct new explicit proper $r$-harmonic functions on the standard
$n$-dimensional hyperbolic spaces $\H^n$ and spheres $\s^n$ for any
$r\ge 1$ and $n\ge 2$.
\end{abstract}

\subjclass[2010]{31B30, 53C43, 58E20}

\keywords{Biharmonic functions, spheres, hyperbolic spaces}

\maketitle

\section{Introduction}

The biharmonic equation is a fourth order partial differential equation which arises in areas of continuum mechanics, including elasticity theory and the solution of Stokes flows. The literature on biharmonic functions is vast, but usually the domains are either surfaces or open subsets of flat Euclidean space $\rn^n$.

Recently, new explicit biharmonic functions were constructed on the classical compact simple Lie groups $\SU n$, $\SO n$ and $\Sp n$, see \cite{Gud-Mon-Rat-1}.  This gives solutions on the 3-dimensional round sphere $\s^3\cong\SU 2$ and the standard hyperbolic space $\H^3$ via a general duality principle. In the literature we have only found explicit proper biharmonic functions from spheres and hyperbolic spaces of dimensions 2 and 3.   For this see the papers \cite{Caddeo}, \cite{Gud-Mon-Rat-1} and \cite{Gud-13}.

The aim of this work is to extend the investigation to higher dimensional spheres $\s^n$ and hyperbolic spaces $\H^n$. We construct a wide collection of new proper biharmonic functions from these spaces of any dimension $n\ge 2$.

The $n$-dimensional hyperbolic space can be modelled in several different ways.  The classical upper-half space model $\H^n$ is the most useful for our purposes.  First we construct a wealth of proper $r$-harmonic functions on the hyperbolic upper-half space $\H^n$, see Theorem \ref{theo:upper}.  Then we formulate our solutions in terms of the standard one-sheeted hyperboloid $\hyp^n$ as a hypersurface of the corresponding Minkowski space $M^{n+1}$, see Theorem \ref{theo:hyperboloid}.  Finally we  employ a general duality principle, between $\hyp^n$ and the standard $n$-dimensional sphere $\s^n$, to construct proper $r$-harmonic functions on $\s^n$, see Theorem \ref{theo:sphere}.

\section{Preliminaries}\label{section-preliminaries}

Let $(M,g)$ be a smooth m-dimensional manifold equipped with a Riemannian metric $g$.  We complexify the tangent bundle $TM$ of $M$ to $T^{\cn}M$ and extend the metric $g$ to a complex-bilinear form on $T^{\cn}M$.  Then the gradient $\nabla f$ of a complex-valued function $f:(M,g)\to\cn$ is a section of $T^{\cn}M$.  In this situation, the well-known {\it linear} Laplace-Beltrami operator (alt. tension field) $\tau$ on $(M,g)$ acts locally on $f$ as follows
\begin{equation}\label{equa:tension}
\tau(f)=\Div (\nabla f)=\sum_{i,j=1}^m\frac{1}{\sqrt{|g|}} \frac{\partial}{\partial x_j}
\left(g^{ij}\, \sqrt{|g|}\, \frac{\partial f}{\partial x_i}\right).
\end{equation}

\begin{definition}\label{definition-proper-r-harmonic}
For an integer $r>0$ the iterated Laplace-Beltrami operator $\tau^r$ is given by
$$\tau^{0} (f)=f\ \ \text{and}\ \ \tau^r (f)=\tau(\tau^{(r-1)}(f)).$$
We say that a complex-valued function $f:(M,g)\to\cn$ is
\begin{enumerate}
\item[(a)] {\it $r$-harmonic} if $\tau^r (f)=0$, and
\item[(b)] {\it proper $r$-harmonic} if $\tau^r (f)=0$ and $\tau^{(r-1)} (f)$ does not vanish identically.
\end{enumerate}
\end{definition}

It should be noted that the {\it harmonic} functions are exactly $r$-harmonic for $r=1$
and the {\it biharmonic} functions are the $2$-harmonic ones.
In some texts, the $r$-harmonic functions are also called {\it polyharmonic} of order $r$.
\vskip .2cm

In the paper \cite{Gud-Mon-Rat-1} the authors develop an interesting connections between the theory of $r$-harmonic functions and the notion of harmonic morphisms.  More specifically, we recall that a map $\pi:(\hat M,\hat g)\to(M,g)$ between two semi-Riemannian manifolds is a {\it harmonic morphism} if it pulls back germs of harmonic functions to germs of harmonic functions. The standard reference on this topic is the book \cite{BW-book} of Baird and Wood.   We also recommend the updated online bibliography \cite{Gud-bib}.  Later on we will make use of the following result.

\begin{proposition}\cite{Gud-Mon-Rat-1}\label{prop:lift-tension}
Let $\pi:(\hat M,\hat g)\to (M,g)$ be a submersive harmonic morphism
from a semi-Riemannian manifold $(\hat M,\hat g)$ to a Riemannian manifold
$(M,g)$. Further let $f:(M,g)\to\cn$ be a smooth function and
$\hat f:(\hat M,\hat g)\to\cn$ be the composition $\hat f=f\circ\pi$.
If $\lambda:\hat M\to\rn^+$ is the dilation of $\pi$ then the tension
field satisfies
$$
\tau(f)\circ\pi=\lambda^{-2}\tau(\hat f)\ \ \text{and}
\ \ \tau^r(f)\circ\pi=\lambda^{-2}\tau(\lambda^{-2}\tau^{(r-1)}(\hat f))
$$
for all positive integers $r\ge 2$.
\end{proposition}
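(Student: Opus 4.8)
The plan is to reduce everything to the defining property of a harmonic morphism together with its effect on the Laplace–Beltrami operator at the level of a single application of $\tau$, and then iterate. The starting point is the classical fact (see \cite{BW-book}) that a submersive harmonic morphism $\pi:(\hat M,\hat g)\to (M,g)$ is, up to the conformal factor given by its dilation $\lambda$, a horizontally conformal harmonic map, and that for any smooth $f:(M,g)\to\cn$ one has the composition law
$$\tau(\hat f)=\lambda^{2}\,\bigl(\tau(f)\circ\pi\bigr),\qquad \hat f=f\circ\pi.$$
This is exactly the first displayed identity in the statement, rewritten; I would quote it as the basic input and spend no effort reproving it. Note that since $(M,g)$ is Riemannian and $\pi$ is submersive, $\lambda$ is a well-defined positive function on $\hat M$, so dividing by $\lambda^{2}$ is legitimate.

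Next I would set up the induction on $r$. The case $r=1$ is the composition law above. Assume the formula $\tau^{r-1}(f)\circ\pi=\lambda^{-2}\tau(\lambda^{-2}\tau^{(r-2)}(\hat f))$ holds (for $r-1\ge 2$; the base case $r=2$ reads $\tau^{2}(f)\circ\pi=\lambda^{-2}\tau(\lambda^{-2}\tau(\hat f))$, which follows by applying the composition law to the function $\tau(f)$ in place of $f$, since $\tau(f)\circ\pi=\lambda^{-2}\tau(\hat f)$ is itself of the form "a function on $M$ pulled back"). For the inductive step, apply the composition law once more to the function $\tau^{(r-1)}(f):(M,g)\to\cn$: this gives
$$\tau^{r}(f)\circ\pi=\tau\bigl(\tau^{(r-1)}(f)\bigr)\circ\pi=\lambda^{-2}\,\tau\bigl(\tau^{(r-1)}(f)\circ\pi\bigr).$$
Now substitute the inductive hypothesis, or more efficiently just the identity $\tau^{(r-1)}(f)\circ\pi=\lambda^{-2}\tau^{(r-2)}(\widehat{\tau(f)})$ where $\widehat{\tau(f)}=\tau(f)\circ\pi=\lambda^{-2}\tau(\hat f)$; this yields exactly $\tau^{r}(f)\circ\pi=\lambda^{-2}\tau(\lambda^{-2}\tau^{(r-1)}(\hat f))$, completing the induction.

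The one genuine subtlety — and the step I would expect to be the main obstacle to a fully rigorous write-up — is bookkeeping the distinction between $\tau^{(r-1)}(\hat f)$, which is the $(r-1)$-fold Laplacian on $\hat M$ of $\hat f$, and the pullback $\tau^{(r-1)}(f)\circ\pi$ of the $(r-1)$-fold Laplacian on $M$: these are \emph{not} equal for $r\ge 2$ precisely because $\lambda$ need not be constant, and the formula in the statement is the exact accounting of the discrepancy. The clean way to organize this is to prove by a separate induction that, for every $k\ge 1$,
$$\tau^{(k)}(f)\circ\pi=\underbrace{\lambda^{-2}\tau(\lambda^{-2}\tau(\cdots\lambda^{-2}\tau}_{k\text{ times}}(\hat f)\cdots))$$
(the "$\lambda^{-2}\tau$-ladder" of height $k$ applied to $\hat f$), and then observe that the right-hand side of the claimed formula for $\tau^{r}(f)\circ\pi$ is obtained from the height-$r$ ladder by recognizing its top $r-1$ rungs as $\tau^{(r-1)}(\hat f)$ when $r-1$ of the conformal factors happen to be absorbed — but since in general they are not, the correct and final statement keeps only the outermost and one interior factor explicit, matching what is asserted. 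Everything else is the iterated composition law plus care with the order of operations; no new analytic ingredient is needed beyond Proposition-level facts about harmonic morphisms already cited.
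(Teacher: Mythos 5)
The paper itself offers no proof of this proposition --- it is quoted from \cite{Gud-Mon-Rat-1} --- so your argument has to stand on its own, and as written it does not. Your input and base case are fine: the composition law $\tau(\hat f)=\lambda^{2}\,(\tau(f)\circ\pi)$ for a submersive harmonic morphism, applied with $\tau^{(r-1)}(f)$ in place of $f$, gives $\tau^{r}(f)\circ\pi=\lambda^{-2}\tau\bigl(\tau^{(r-1)}(f)\circ\pi\bigr)$, and for $r=2$ this is exactly the asserted identity. The gap is in your inductive step: the ``more efficient'' identity $\tau^{(r-1)}(f)\circ\pi=\lambda^{-2}\tau^{(r-2)}(\widehat{\tau(f)})$ is false once $r-1\ge 3$, because $\lambda$ is not constant and cannot be pulled through the iterated operator $\tau^{(r-2)}$ on $\hat M$; moreover, substituting $\widehat{\tau(f)}=\lambda^{-2}\tau(\hat f)$ into $\tau^{(r-2)}$ would produce $\tau^{(r-2)}\bigl(\lambda^{-2}\tau(\hat f)\bigr)$, not $\tau^{(r-1)}(\hat f)$, so the claim that this ``yields exactly'' $\lambda^{-2}\tau(\lambda^{-2}\tau^{(r-1)}(\hat f))$ is unjustified. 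Your closing paragraph correctly flags the distinction between $\tau^{(r-1)}(\hat f)$ and $\tau^{(r-1)}(f)\circ\pi$, but the attempted reconciliation (``conformal factors happen to be absorbed'') never actually bridges it.

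Indeed it cannot be bridged: with $\tau^{(r-1)}(\hat f)$ read literally as the $(r-1)$-fold Laplacian of $\hat f$ on $\hat M$, the displayed formula fails for $r\ge 3$ in general. Concretely, for the radial projection $\pi:\rn^{2}\setminus\{0\}\to\s^{1}$ (so $\lambda^{-2}(y)=|y|^{2}$) and $f(\theta)=e^{i\theta}$ one finds $\tau^{3}(f)\circ\pi=-e^{i\theta}$, whereas $\lambda^{-2}\tau\bigl(\lambda^{-2}\tau^{2}(\hat f)\bigr)=-9\,|y|^{-2}e^{i\theta}$, which is not even constant along the fibres of $\pi$. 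What your induction genuinely proves --- and what is the correct general statement, the one the later sections of the paper actually rely on, and presumably the intended reading of the displayed formula --- is the recursion $\tau^{r}(f)\circ\pi=\lambda^{-2}\tau\bigl(\tau^{(r-1)}(f)\circ\pi\bigr)$, equivalently the height-$r$ ladder $\lambda^{-2}\tau\bigl(\lambda^{-2}\tau(\cdots\lambda^{-2}\tau(\hat f)\cdots)\bigr)$; for $r=2$, the biharmonic case treated in \cite{Gud-Mon-Rat-1}, this coincides with the stated identity. If you restate and prove that ladder version, your argument is correct and essentially the standard one; as it stands, the inductive step establishes nothing for $r\ge 3$.
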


\section{The hyperbolic upper-half space $\H^n$}

In this section we construct new complex-valued proper $r$-harmonic functions
on the $n$-dimensional hyperbolic space $\H^n$ for any $r\ge 1$ and $n\ge 2$.
We model $\H^n$ as the hyperbolic upper-half space i.e. the
differentiable manifold $$\H^n=\{(t,x)|\ t\in\rn^+\ \text{and}\ x\in\rn^{n-1}\}$$
equipped with its standard Riemannian metric $ds^2$ satisfying
$$ds^2=\frac 1{t^2}\cdot(dt^2+dx_1^2+\dots +dx_{n-1}^2).$$
It is then a direct consequence of Equation (\ref{equa:tension})
that the corresponding Laplace-Beltrami operator $\tau$ satisfies
$$\tau(f)=t^2\cdot (\frac{\partial^2 f}{\partial x_1^2}+\cdots
+\frac{\partial^2 f}{\partial x_{n-1}^2})+t^2\cdot\frac{\partial^2 f}{\partial t^2}
-(n-2)\cdot t\cdot\frac{\partial f}{\partial t}.$$

\begin{theorem}\label{theo:basic-step}
Let the $n$-dimensional hyperbolic space $\H^n$ be modelled as the upper-half space i.e.
$\H^n=\rn^+\times\rn^{n-1}$.  Let $h:\rn^{n-1}\to\cn$ be a non-constant function harmonic
with respect to the Euclidean metric on $\rn^{n-1}$ and $p_1:\rn^+\to\cn$ be differentiable.
Then the function $f_1:\H^n\to\cn$ defined by $$f_1(t,x)=p_1(t)\cdot h(x)$$ is harmonic on $\H^n$
if and only if $p_1$ is of the form $p_1(t)=a_1+b_1\cdot t^{n-1}$, for some constants
$a_1,b_1\in\cn$.
\end{theorem}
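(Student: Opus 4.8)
The plan is to substitute the product ansatz $f_1(t,x)=p_1(t)\,h(x)$ directly into the explicit formula for the Laplace--Beltrami operator on the upper-half space model, namely
\[
\tau(f)=t^2\Bigl(\frac{\partial^2 f}{\partial x_1^2}+\cdots+\frac{\partial^2 f}{\partial x_{n-1}^2}\Bigr)+t^2\frac{\partial^2 f}{\partial t^2}-(n-2)\,t\,\frac{\partial f}{\partial t},
\]
and to read off the resulting ordinary differential equation for $p_1$. Since $h$ is harmonic with respect to the Euclidean metric on $\rn^{n-1}$, the first group of terms contributes $t^2\,p_1(t)\,\Delta_{\rn^{n-1}}h(x)=0$, so the condition $\tau(f_1)=0$ reduces to
\[
h(x)\bigl(t^2\,p_1''(t)-(n-2)\,t\,p_1'(t)\bigr)=0.
\]
Because $h$ is non-constant, hence not identically zero, we may cancel $h(x)$ at any point where it is nonzero, and by continuity the vanishing of the $t$-factor must hold for all $t\in\rn^+$. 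Thus $f_1$ is harmonic on $\H^n$ if and only if $p_1$ solves the Euler-type ODE $t\,p_1''(t)-(n-2)\,p_1'(t)=0$ on $\rn^+$.

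Next I would solve this ODE. Writing $q=p_1'$, the equation becomes $t\,q'=(n-2)\,q$, a separable first-order linear equation whose general solution is $q(t)=p_1'(t)=c\,t^{n-2}$ for an arbitrary constant $c\in\cn$; here one uses $t>0$ so that $t^{n-2}$ is well defined and smooth. Integrating once more, and noting that $n\ge 2$ guarantees the exponent $n-2\ge 0$ so that no logarithmic term appears, gives $p_1(t)=a_1+b_1\,t^{n-1}$ with $a_1\in\cn$ an integration constant and $b_1=c/(n-1)\in\cn$. Conversely, any such $p_1$ is manifestly a solution, which establishes the "if" direction by direct substitution.

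I do not expect any serious obstacle here; the only point requiring a little care is the logical step of passing from $h(x)\cdot(\,\cdot\,)=0$ to the pointwise-in-$t$ ODE, which is where the hypothesis that $h$ be non-constant is used — it ensures $h$ is not the zero function, so that the $t$-dependent factor is forced to vanish identically. A secondary point worth a remark is the role of $n\ge 2$: it is exactly what prevents the degenerate case $n-2=-1$ in which the second integration would produce $\log t$ rather than a power of $t$, so the stated form of $p_1$ is valid precisely in the range of $n$ under consideration.
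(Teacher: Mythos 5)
Your proposal is correct and follows essentially the same route as the paper: substitute the product ansatz into the explicit Laplace--Beltrami operator, use harmonicity of $h$ (and $h\not\equiv 0$) to reduce to the Euler-type ODE $t\,p_1''-(n-2)\,p_1'=0$, and solve it. The only cosmetic difference is that you solve the ODE by separating variables for $q=p_1'$, whereas the paper recognizes $t^{-n}\tau(p_1)$ as the exact derivative $\frac{\partial}{\partial t}\bigl(t^{2-n}\,p_1'\bigr)$ and integrates twice.
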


\begin{proof}
We are assuming that $h:\rn^{n-1}\to\cn$ is a harmonic function with respect to
the Euclidean metric on $\rn^{n-1}$ i.e.
$$\frac{\partial^2 h}{\partial x_1^2}+\cdots +\frac{\partial^2 h}{\partial x_{n-1}^2}=0.$$
Then the tension field $\tau(f_1)$ satisfies
$$\tau(f_1)=t^2\cdot h(x)\cdot\frac{\partial^2 p_1}{\partial t^2}
-(n-2)\cdot t\cdot h(x)\cdot \frac{\partial p_1}{\partial t}=h(x)\cdot \tau(p_1).$$
This means that $f_1:\H^n\to\cn$ is harmonic if and only if $\tau(p_1)=0$,
or equivalently,
$$t^{-n}\cdot\tau(p_1)=t^{2-n}\cdot\frac{\partial^2 p_1}{\partial t^2}
+(2-n)\cdot t^{1-n}\cdot\frac{\partial p_1}{\partial t}
=\frac{\partial}{\partial t}(t^{2-n}\cdot\frac{\partial p_1}{\partial t})=0.$$
It then follows that there exists a complex constant $b_1$ such that
$$\frac{\partial p_1}{\partial t}=b_1(n-1)\cdot t^{n-2}$$
and integrating once again gives
$$p_1(t)=a_1+b_1\cdot t^{n-1}$$
for some constant $a_1\in\cn$.
\end{proof}

The above double integration leads to the following natural definition of the integral operator $I_n$.

\begin{definition}
Let $p:\rn^+\to\rn$ be a continuous function.  Then we define the integral operator $I_n$ by
$$I_n(p)(t)=\int t^{n-2}\cdot\bigl( \int t^{-n}\cdot p(t)dt+\alpha\bigr) dt+\beta.$$
Here $\alpha,\beta\in\cn$ are undetermined constants.
\end{definition}

\begin{theorem}\label{theo:upper}
Let the $n$-dimensional hyperbolic space $\H^n$ be modelled as the upper-half space i.e.
$\H^n=\rn^+\times\rn^{n-1}$.  Let $h:\rn^{n-1}\to\cn$ be a non-constant function harmonic
with respect to the Euclidean metric on $\rn^{n-1}$ and $p_r:\rn^+\to\cn$ be given by
$$p_r(t)=(a_r+b_r\cdot t^{n-1})\cdot\log (t)^{r-1},$$
where $(a_r,b_r)\in\cn^2$ is non-zero and $r\ge 1$. Then the function $f_r:\H^n\to\cn$
with $$f_r(t,x)=p_r(t)\cdot h(x)$$ is proper $r$-harmonic on $\H^n$.
\end{theorem}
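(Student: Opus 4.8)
The plan is to separate variables completely and reduce everything to a single ordinary differential operator. Since $h$ is harmonic with respect to the Euclidean metric on $\rn^{n-1}$, the computation already carried out in the proof of Theorem \ref{theo:basic-step} shows that for \emph{any} twice differentiable $\phi\colon\rn^+\to\cn$ one has
$$\tau\bigl(\phi(t)\,h(x)\bigr)=h(x)\cdot L(\phi)(t),\qquad L(\phi):=t^2\,\phi''-(n-2)\,t\,\phi'.$$
Iterating this identity gives $\tau^k(f_r)=h(x)\cdot L^k(p_r)$ for every $k\ge 0$. As $h$ is non-constant, hence not identically zero, it is therefore enough to establish the two purely one-dimensional claims $L^r(p_r)\equiv 0$ and $L^{r-1}(p_r)\not\equiv 0$.

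The second step is to linearise $L$. Putting $s=\log t$ transforms the equidimensional operator $L$ into the constant-coefficient operator
$$L=\frac{d^2}{ds^2}-(n-1)\,\frac{d}{ds}=D\,\bigl(D-(n-1)\bigr),\qquad D:=\frac{d}{ds},$$
while $p_r$ becomes $q(s)=\bigl(a_r+b_r\,e^{(n-1)s}\bigr)\,s^{r-1}$; consequently $L^k=D^k\,(D-(n-1))^k$. Two elementary observations drive the rest: (i) $(D-(n-1))\bigl(e^{(n-1)s}g(s)\bigr)=e^{(n-1)s}\,g'(s)$, so $(D-(n-1))^k$ applied to $e^{(n-1)s}$ times a polynomial simply differentiates that polynomial $k$ times; and (ii) $(D-(n-1))^k$ sends polynomials to polynomials (of the same degree, since its constant term $(-(n-1))^k$ is non-zero), whereas $D^k$ annihilates every polynomial of degree $<k$.

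It then remains to apply (i) and (ii). Writing $q=a_r s^{r-1}+b_r e^{(n-1)s}s^{r-1}$ and acting by $(D-(n-1))^r$: observation (i) kills the second summand (one differentiates $s^{r-1}$ exactly $r$ times), while (ii) turns the first into a polynomial of degree $\le r-1$; applying $D^r$ then annihilates the remainder, so $L^r(p_r)\equiv 0$ and $f_r$ is $r$-harmonic. For properness one instead applies $L^{r-1}=D^{r-1}(D-(n-1))^{r-1}$ and, after the same short manipulation, obtains
$$L^{r-1}(p_r)=(r-1)!\,(n-1)^{r-1}\,\bigl((-1)^{r-1}a_r+b_r\,t^{n-1}\bigr),$$
which for $n\ge 2$ vanishes identically only when $a_r=b_r=0$ — exactly the case excluded by hypothesis. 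Hence $\tau^{r-1}(f_r)=h\cdot L^{r-1}(p_r)$ is not identically zero, and $f_r$ is proper $r$-harmonic.

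The argument is essentially mechanical once the reduction to $D\,(D-(n-1))$ is made; the only point requiring a little care — rather than a routine differentiation — is the properness statement, where one must track the two surviving constants in $L^{r-1}(p_r)$ in order to see that the hypothesis $(a_r,b_r)\neq(0,0)$ is precisely what guarantees $\tau^{r-1}(f_r)\not\equiv 0$.
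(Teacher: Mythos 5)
Your argument is correct, and it takes a genuinely different route from the paper. The paper proceeds constructively: after the base case it introduces the double-integration operator $I_n$, computes $I_n(p_1)$ explicitly to exhibit the logarithmic term, and then asserts that ``this process can be repeated and the result follows by induction,'' leaving the general inductive step only sketched. You instead verify the claim directly by linearising the radial operator: with $s=\log t$ the Euler-type operator $L(\phi)=t^2\phi''-(n-2)t\phi'$ becomes the constant-coefficient operator $D\,(D-(n-1))$, whose commuting factorisation lets you compute $L^k(p_r)$ in closed form. This makes both halves of the statement completely explicit — $L^r(p_r)\equiv 0$ follows from the two elementary observations about $(D-(n-1))$ acting on $e^{(n-1)s}\cdot(\text{polynomial})$ and on polynomials, and properness follows from the exact formula
$$\tau^{r-1}(f_r)=(r-1)!\,(n-1)^{r-1}\,\bigl((-1)^{r-1}a_r+b_r\,t^{n-1}\bigr)\,h(x),$$
which indeed reproduces the paper's worked example ($n=4$, $r=2$, giving $-3(a_2-b_2t^3)h(x)$) and is nonzero precisely because $1$ and $t^{n-1}$ are linearly independent, $n\ge 2$, and $h\not\equiv 0$. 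What each approach buys: the paper's iterated integration explains how the logarithmic solutions are discovered and would let one generate further solutions, whereas your factorisation gives a shorter, fully rigorous verification that avoids the unspelled induction and yields the iterated tension fields explicitly at every order.
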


\begin{proof}
We are assuming that $h:\rn^{n-1}\to\cn$ is a harmonic function with respect to
the Euclidean metric on $\rn^{n-1}$ so for each $k\in\zn^+$ the tension field $\tau^k(f_1)$ satisfies
$$\tau^k(f_1(x,t))=h(x)\cdot\tau^k(p_r(t)).$$
We have seen in Theorem \ref{theo:basic-step} that if $p_0(t)=0$ then $p_1(t)=I_n(p_0)(t)$ is of the form
$$p_1(t)=a_1+b_1\cdot t^{n-1},$$
where $a_1,b_1\in\cn$.  This implies that the function $f_1(t,x)=p_1(t)\cdot h(x)$ is proper $1$-harmonic
if and only if $(a_1,b_1)\in\cn^2$ is non-zero.

For the next step, it is easily seen that
$$I_n(p_1)(t)=q_{21}(t)+q_{22}(t),$$
where
$$q_{21}(t)=\frac 1{(n-2)^2}\cdot(\alpha (n-1)^2+((n-1)\beta-b_1)t^{n-1}),$$
$$q_{22}(t)=-\frac 1{(n-1)}(a_1-b_1t^{n-1})\cdot\log t.$$
and furthermore $\tau(q_{21})=0$.  This implies that for any non-zero $(a_2,b_2)\in\cn^2$ the function
$$p_2(t)=(a_2+b_2\cdot t^{n-1})\cdot\log t$$ is proper $2$-harmonic.  This immediately tells us that
$f_2:\H^n\to\cn$ of the form $f_2(t,x)=h(x)\cdot p_2(t)$ is proper 2-harmonic on $\H^n$ if and only if
$(a_2,b_2)\in\cn^2$ is non-zero.

This process can now be repeated and the result follows by induction.
\end{proof}

\begin{example}
Let $h:\rn^3\to\cn$ be a non-constant function harmonic
with respect to the Euclidean metric on $\rn^3$ and $p_2:\rn^+\to\cn$ be given by
$$p_2(t)=(a_2+b_2\cdot t^3)\cdot\log (t),$$
where $(a_2,b_2)\in\cn^2$ is non-zero. Further let the function $f_2:\H^4\to\cn$
be defined by $f_2(t,x)=p_2(t)\cdot h(x)$.
Then the tension fields $\tau(f_2)$ and $\tau^2(f_2)$ satisfy
$$\tau(f_2)=-3(a_2-b_2\cdot t^3)\cdot h(x)\ \ \text{and}\ \ \tau^2(f_2)=0.$$
This shows that $f:\H^4\to\cn$ is proper biharmonic.
\end{example}

\section{The hyperbolic one-sheeted hyperboloid $\hyp^n$}

We have already constructed a wealth of $r$-harmonic function on the $n$-dimensional
hyperbolic space $\H^n$ modelled as the upper half space.  We are now interested
in extending our constructions to the $n$-dimensional sphere $\s^n$ via a general
duality principle developed in \cite{Gud-Sve-1}.  For this we need to
understand our new constructions as functions from the hyperbolic space $\hyp^n$
modelled as the one-sheeted hyperboloid in Minkowski space.
\vskip .2cm

Let $M^{n+1}$ be the standard $(n+1)$-dimensional Minkowski space equipped with
its Lorentzian metric
$$
(x,y)_L=-x_0y_0+\sum_{k=1}^nx_ky_k.
$$
Bounded by the light cone, the open set
$$
U^{n+1}=\{y\in M^{n+1}|\ (y,y)_L<0\ \text{and}\ y_0>0\}
$$
contains the $n$-dimensional hyperbolic space
$$
\hyp^n=\{(y_0,y_1,\dots ,y_n)\in M^{n+1}|\ (y,y)_L=-1\ \text{and}\ y_0>0\}.
$$
Let $\pi:U^{n+1}\to \hyp^n$ be the radial projection given by
$$
\pi:y\mapsto \frac{y}{\sqrt{-(y,y)_L}}.
$$
This is a harmonic morphism and its dilation satisfies $\lambda^{-2}(y)=-|y|^2_L$,
see \cite{Gud-7}.  This means that for this situation we have the following special
version of Proposition \ref{prop:lift-tension}.

\begin{proposition}\label{prop:harmonic-morphism-Hn}
Let $\pi:U^{n+1}\to\hyp^n$ be the submersive harmonic morphism given by
$$\pi:y\mapsto \frac{y}{\sqrt{-(y,y)_L}}.$$
Further let $f:\hyp^n\to\cn$ be a smooth function and
$\hat f:U^{n+1}\to\cn$ be the composition $\hat f=f\circ\pi$.
Then the tension fields of $f$ satisfy
$$
\tau(f)\circ\pi=-|y|^2_L\cdot \tau(\hat f)\ \ \text{and}\ \
\tau^r(f)\circ\pi=-|y|^2_L\cdot\tau(-|y|^2_L\cdot\tau^{(r-1)}(\hat f))
$$
for all positive integers $r\ge 2$.
\end{proposition}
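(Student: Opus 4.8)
The plan is to recognise this statement as nothing more than the specialisation of the general transformation law of Proposition \ref{prop:lift-tension} to the particular map $\pi\colon U^{n+1}\to\hyp^n$ at hand, so that the bulk of the argument consists of checking that the hypotheses of that proposition are in force. First I would note that $U^{n+1}$, being an open subset of Minkowski space $M^{n+1}$, carries the restriction of the Lorentzian metric $(\cdot\,,\cdot)_L$ and is therefore a semi-Riemannian manifold, while the hyperboloid $\hyp^n=\{y\in M^{n+1}\mid (y,y)_L=-1,\ y_0>0\}$ inherits a positive-definite metric and is thus genuinely Riemannian; this puts us exactly in the semi-Riemannian-to-Riemannian setting required by Proposition \ref{prop:lift-tension}. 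Next, $\pi$ is a surjective submersion, since it is the radial projection of the open cone $U^{n+1}$ onto $\hyp^n$ along the rays $\rn^+\cdot y$, and it is a harmonic morphism whose dilation $\lambda$ satisfies $\lambda^{-2}(y)=-(y,y)_L=-|y|^2_L$; both of these last two facts are the content recorded in the paragraph preceding the statement, citing \cite{Gud-7}.

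Granting this, the conclusion is immediate. Applying Proposition \ref{prop:lift-tension} to $\pi$ and to the smooth function $f\colon\hyp^n\to\cn$ with lift $\hat f=f\circ\pi$ gives
$$\tau(f)\circ\pi=\lambda^{-2}\,\tau(\hat f),\qquad \tau^r(f)\circ\pi=\lambda^{-2}\,\tau\bigl(\lambda^{-2}\,\tau^{(r-1)}(\hat f)\bigr)$$
for every integer $r\ge 2$, and substituting $\lambda^{-2}(y)=-|y|^2_L$ into these two identities yields precisely
$$\tau(f)\circ\pi=-|y|^2_L\cdot\tau(\hat f),\qquad \tau^r(f)\circ\pi=-|y|^2_L\cdot\tau\bigl(-|y|^2_L\cdot\tau^{(r-1)}(\hat f)\bigr),$$
which is the asserted formula. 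Since $(y,y)_L<0$ on $U^{n+1}$, the factor $-|y|^2_L$ is a smooth positive function there, so all of the expressions appearing are well defined and no positivity or domain subtlety intervenes.

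In truth there is no serious obstacle: the substantive work has already been carried out upstream, partly in \cite{Gud-7} (the harmonic-morphism property of radial projection and the explicit form of its dilation) and partly in Proposition \ref{prop:lift-tension} (the lifting rule for iterated tension fields under a submersive harmonic morphism). If one wished to be self-contained one could instead verify directly that $\pi$ is a horizontally conformal submersion with minimal fibres --- the fibres are the timelike rays through the origin, which are geodesics, the horizontal distribution is tangent to the level hyperboloids $(y,y)_L=\text{const}$, and $\pi$ restricts to a homothety on each of them --- and read off $\lambda^{-2}(y)=-(y,y)_L$ from the conformal factor; but given the cited reference this is unnecessary. The only point that genuinely calls for a moment's attention is confirming that the Lorentzian-to-Riemannian situation here really does meet the hypotheses of Proposition \ref{prop:lift-tension}, which, as noted above, it does.
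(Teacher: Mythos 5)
Your proposal is correct and matches the paper's own treatment: the paper presents this proposition precisely as the specialization of Proposition \ref{prop:lift-tension} to the radial projection $\pi$, using the fact from \cite{Gud-7} that $\pi$ is a harmonic morphism with dilation satisfying $\lambda^{-2}(y)=-|y|^2_L$. Your verification of the hypotheses and substitution of the dilation is exactly the intended argument.
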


\begin{remark}
It should be noted that in the Minkowski space $M^{n+1}$ the
tension field is the classical the wave operator $\Box$ of d'Alembert given by
$$
\Box (\hat f) = -\frac{\partial^2\hat f}{\partial y_0^2}
+\frac{\partial^2\hat f}{\partial y_1^2}+\cdots +\frac{\partial^2\hat f}{\partial y_n^2}.
$$
\end{remark}

It is a classical fact that the map $\Psi:(\hyp ^n,ds^2_L)\to (\H^n,ds^2)$ given by
$$\Psi:(y_0,y_1,\dots,y_n)\mapsto 2\cdot (\frac{1}{y_0+y_1},\frac{y_2}{y_0+y_1},\dots,\frac{y_n}{y_0+y_1})$$
is an isometry between the two different models of the $n$-dimensional
hyperbolic space.  For this see for example \cite{Can-Flo-Ken-Par}.
This means that the composition $\Phi=\Psi\circ\pi:(U^{n+1},ds^2_L)\to (H^n,ds^2)$ satisfies
$$\Phi:(y_0,y_1,\dots,y_n)\mapsto (\frac{2\sqrt{y_0^2-y_1^2-\cdots -y_n^2}}{y_0+y_1},\frac{2y_2}{y_0+y_1},\dots,\frac{2y_n}{y_0+y_1}).$$
We now have the following result corresponding to Theorem \ref{theo:upper}.

\begin{theorem}\label{theo:hyperboloid}
Let the $n$-dimensional hyperbolic space $\hyp ^n$ be modelled as the one-sheeted hyperboloid
in the Minkowski space $M^{n+1}$.  Let $h:\rn^{n-1}\to\cn$ be a non-constant function harmonic
with respect to the Euclidean metric on $\rn^{n-1}$ and $p_r:\rn^+\to\cn$ be given by
$$p_r(t)=(a_r+b_r\cdot t^{n-1})\cdot\log (t)^{r-1},$$
where $(a_r,b_r)\in\cn^2$ is non-zero and $r\ge 1$. Then the function $f:\hyp ^n\to\cn$
with $$f(y_0,y_1,\dots,y_n)=p_r(\frac{2\sqrt{y_0^2-y_1^2-\cdots -y_n^2}}{y_0+y_1})\cdot h(\frac{2y_2}{y_0+y_1},\dots,\frac{2y_n}{y_0+y_1})$$ is proper $r$-harmonic on $\hyp ^n$.
\end{theorem}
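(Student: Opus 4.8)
The plan is to recognise the function $f$ in the statement as the pull-back, via the isometry $\Psi$, of the proper $r$-harmonic function $f_r$ constructed in Theorem \ref{theo:upper}, and then to use the fact that an isometry commutes with the Laplace–Beltrami operator.

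First I would check the bookkeeping. On the hyperboloid $\hyp^n$ one has $(y,y)_L=-1$, so $y_0^2-y_1^2-\cdots-y_n^2=1$, and hence the components of $\Psi(y)=(t,x_1,\dots,x_{n-1})$ are $t=2/(y_0+y_1)$ and $x_j=2y_{j+1}/(y_0+y_1)$ for $1\le j\le n-1$. Replacing $\sqrt{y_0^2-y_1^2-\cdots-y_n^2}$ by $1$ in the formula of the theorem, this shows that the function $f$ in the statement is exactly $f=f_r\circ\Psi$, where $f_r(t,x)=p_r(t)\cdot h(x)$ is the function of Theorem \ref{theo:upper} on the upper-half space model $\H^n$.

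Next, I would invoke the elementary fact that the tension field $\tau$ is an intrinsic Riemannian differential operator: if $\Psi:(\hyp^n,ds^2_L)\to(\H^n,ds^2)$ is an isometry, then $\tau_{\hyp^n}(u\circ\Psi)=\tau_{\H^n}(u)\circ\Psi$ for every smooth $u:\H^n\to\cn$. Iterating this identity, a trivial induction on $k$ gives $\tau^k_{\hyp^n}(f_r\circ\Psi)=\tau^k_{\H^n}(f_r)\circ\Psi$ for all $k\ge 0$. Applying this with $k=r$ and with $k=r-1$, and using that Theorem \ref{theo:upper} provides $\tau^r_{\H^n}(f_r)=0$ while $\tau^{r-1}_{\H^n}(f_r)$ does not vanish identically, we conclude $\tau^r_{\hyp^n}(f)=0$, and (since $\Psi$ is a diffeomorphism, hence surjective) $\tau^{r-1}_{\hyp^n}(f)=\tau^{r-1}_{\H^n}(f_r)\circ\Psi$ does not vanish identically either; thus $f$ is proper $r$-harmonic on $\hyp^n$.

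I do not expect any real obstacle here: the only points requiring care are the coordinate identification in the first step, which rests on the constraint $(y,y)_L=-1$, and recording the standard fact that isometries intertwine Laplace–Beltrami operators. As an alternative to the latter, one could instead lift $f_r$ along the harmonic morphism $\Phi=\Psi\circ\pi:U^{n+1}\to\H^n$, whose dilation equals that of $\pi$, namely $-|y|_L^2$, and apply Proposition \ref{prop:harmonic-morphism-Hn}; but this is a longer route to the same conclusion, and it is also the form of the computation one would want when subsequently passing to $\s^n$ through the duality principle of \cite{Gud-Sve-1}.
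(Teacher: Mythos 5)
Your proposal is correct and follows essentially the same route as the paper, which likewise deduces the result from Theorem \ref{theo:upper} together with the fact that $\Psi:(\hyp^n,ds^2_L)\to(\H^n,ds^2)$ is an isometry (the paper also cites Proposition \ref{prop:harmonic-morphism-Hn}, the lift you mention as an alternative). You simply spell out the details---the identification $f=f_r\circ\Psi$ using $(y,y)_L=-1$ and the commutation of $\tau^k$ with isometries---which the paper leaves implicit.
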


\begin{proof}
This is a direct consequence of Theorem \ref{theo:upper}, Proposition
\ref{prop:harmonic-morphism-Hn}
and the fact that $\Psi:(\hyp ^n,ds^2_L)\to (\H^n,ds^2)$ is an isometry.
\end{proof}

\section{The $n$-dimensional sphere $\s^n$.}

Let the $(n+1)$-dimensional real vector space $\rn^{n+1}$ be equipped with its
standard Euclidean scalar product $(\cdot ,\cdot )$ satisfying
$$
(x,y)=x_1y_1+x_2y_2+\cdots +x_{n+1}y_{n+1}.
$$
Then the $n$-dimensional round unit sphere $\s^n$ in $\rn^{n+1}$ is given by
$$
\s^n=\{(y_1,y_2,\dots,y_{n+1})\in\R^{n+1}|\ y_1^2+y_2^2+\cdots +y_{n+1}^2=1\}.
$$
The radial projection $\pi:\R^{n+1}\setminus\{0\}\to \s^n$ with $\pi:y\mapsto y/|y|$
is a well-known harmonic morphism and its dilation satisfies $\lambda^{-2}(y)=|y|^2$.
This means that for this situation we have the following special version of Proposition
\ref{prop:lift-tension}.

\begin{proposition}\label{prop:harmonic-morphism-Sn}
Let $\pi:\rn^{n+1}\setminus\{0\}\to\s^n$ be the submersive harmonic morphism given by
$$\pi:y\mapsto \frac{y}{|y|}.$$
Further let $W$ be an open subset of $\s^n$, $f:W\to\cn$ be a smooth function and
$\hat f:\pi^{-1}(W)\subset\rn^{n+1}\setminus\{0\}\to\cn$ be the composition $\hat f=f\circ\pi$.
Then the tension fields of $f$ satisfy
$$
\tau(f)\circ\pi=|y|^2\cdot \tau(\hat f)\ \ \text{and}\ \
\tau^r(f)\circ\pi=|y|^2\cdot\tau(|y|^2\cdot\tau^{(r-1)}(\hat f))
$$
for all positive integers $r\ge 2$.
\end{proposition}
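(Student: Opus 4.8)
The plan is to obtain this as an immediate specialisation of the general Proposition~\ref{prop:lift-tension}, exactly as Proposition~\ref{prop:harmonic-morphism-Hn} was obtained in the hyperboloid case. All the geometric input has already been recorded just above the statement: the radial projection $\pi:\rn^{n+1}\setminus\{0\}\to\s^n$, $\pi:y\mapsto y/|y|$, is a submersive harmonic morphism whose dilation satisfies $\lambda^{-2}(y)=|y|^2$. Granting this, one simply substitutes $\lambda^{-2}(y)=|y|^2$ into the two formulas of Proposition~\ref{prop:lift-tension}: the first identity there gives $\tau(f)\circ\pi=|y|^2\cdot\tau(\hat f)$, and for $r\ge 2$ iterating yields $\tau^r(f)\circ\pi=|y|^2\cdot\tau\bigl(|y|^2\cdot\tau^{(r-1)}(\hat f)\bigr)$, which is precisely the assertion.

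For completeness I would include a short verification of the claim about the dilation. At $y\neq 0$ the fibre of $\pi$ through $y$ is the ray $\rn^+\cdot y$, so the vertical subspace is $\rn\cdot y$ and the horizontal subspace is $y^\perp$. Since the derivative of $y\mapsto 1/|y|$ in any direction orthogonal to $y$ vanishes, one finds $d\pi_y(v)=v/|y|$ for every horizontal $v\in y^\perp$; hence $|d\pi_y(v)|=|v|/|y|$, so $\pi$ is horizontally conformal with $\lambda(y)=1/|y|$ and $\lambda^{-2}(y)=|y|^2$. The harmonicity of $\pi$ is the standard fact that radial projections onto round spheres are harmonic morphisms, for which one can cite \cite{BW-book}, or check directly that each Euclidean Laplacian $\Delta(y_i/|y|)$ contributes only a vertical component to the tension field $\tau(\pi)$.

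The one point that deserves a line of care is that here $f$ is defined only on an open subset $W\subseteq\s^n$, whereas Proposition~\ref{prop:lift-tension} is phrased for functions on the whole target; since those identities are local, it suffices to apply them to the restriction of $\pi$ to $\pi^{-1}(W)$, so this causes no trouble. Beyond this there is no real obstacle: the proof is a routine translation of Proposition~\ref{prop:lift-tension} into the present setting, with the substitution $\lambda^{-2}(y)=|y|^2$ doing all the work.
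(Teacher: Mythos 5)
Your proposal is correct and follows exactly the paper's (implicit) argument: the paper also obtains this proposition by specialising Proposition~\ref{prop:lift-tension} to the radial projection, using the stated fact that it is a submersive harmonic morphism with dilation $\lambda^{-2}(y)=|y|^2$. Your added verification of the dilation and the remark on locality are fine supplementary details but do not change the route.
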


\begin{remark}
It should be noted that in the $(n+1)$-dimensional Euclidean space $\rn^{n+1}$ the
tension field is the classical Laplace operator $\Delta$ given by
$$
\Delta (\hat f) = \frac{\partial^2\hat f}{\partial y_1^2}
+\frac{\partial^2\hat f}{\partial y_2^2}+\cdots
+\frac{\partial^2\hat f}{\partial y_{n+1}^2}.
$$
\end{remark}

\begin{theorem}\label{theo:sphere}
Let $\s^n$ be the round unit sphere in the standard $(n+1)$-dimensional Euclidean
space $\rn^{n+1}$.  Let $h:\rn^{n-1}\to\cn$ be a non-constant function harmonic
with respect to the Euclidean metric on $\rn^{n-1}$ and $p_r:\rn^+\to\cn$ be given by
$$p_r(t)=(a_r+b_r\cdot t^{n-1})\cdot\log (t)^{r-1},$$
where $(a_r,b_r)\in\cn^2$ is non-zero and $r\ge 1$. Then the function 
$f_r:W\to\cn$ defined on an open subset $W$ of $\s^n$
with $$f_r(y_1,\dots,y_n)=p^*_r(\frac{2|y|}{y_2+i\cdot y_1})
\cdot h^*(\frac{2y_3}{y_2+i\cdot y_1},\dots,\frac{2y_{n+1}}{y_2+i\cdot y_1})$$
is proper $r$-harmonic.  Here $p^*_r$ and $h^*$ are some local complex analytic 
extensions of $p_r$ and $h$, respectively.
\end{theorem}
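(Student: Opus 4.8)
The plan is to derive Theorem~\ref{theo:sphere} from Theorem~\ref{theo:hyperboloid} by means of the duality principle of \cite{Gud-Sve-1}, realised through the two harmonic morphisms $\pi$ onto $\hyp^n$ and onto $\s^n$. Write $F:\hyp^n\to\cn$ for the function of Theorem~\ref{theo:hyperboloid} and $\hat F=F\circ\pi:U^{n+1}\to\cn$ for its cone lift; since the defining expression of $F$ is homogeneous of degree zero in $(y_0,\dots,y_n)$, the lift $\hat F$ is given by the very same formula on $U^{n+1}$ and is again homogeneous of degree zero. Put $D_L(\hat g)=-(y,y)_L\cdot\Box(\hat g)$. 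Proposition~\ref{prop:harmonic-morphism-Hn}, applied in turn to $F,\tau(F),\tau^2(F),\dots$, gives $\tau^k(F)\circ\pi=D_L^k(\hat F)$ for every $k\ge 1$; hence, by Theorem~\ref{theo:hyperboloid}, $D_L^r(\hat F)\equiv 0$ while $D_L^{r-1}(\hat F)\not\equiv 0$. The analogous computation on the sphere, with $D_{\s}(\hat g)=|y|^2\cdot\Delta(\hat g)$ and Proposition~\ref{prop:harmonic-morphism-Sn}, shows conversely that a function $\hat G$ on an open cone of $\rn^{n+1}\setminus\{0\}$ which is homogeneous of degree zero and satisfies $D_{\s}^r(\hat G)\equiv 0$, $D_{\s}^{r-1}(\hat G)\not\equiv 0$ descends to a proper $r$-harmonic function on the open subset $W\subset\s^n$ it covers. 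So it suffices to produce such a $\hat G$ from $\hat F$.

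That is exactly what the substitution underlying the duality principle of \cite{Gud-Sve-1} supplies: replacing $(y_0,y_1,\dots,y_n)$ by $(i\,y_1,y_2,\dots,y_{n+1})$ turns the Lorentzian form $-y_0^2+y_1^2+\cdots+y_n^2$ of $M^{n+1}$ into the Euclidean form $y_1^2+\cdots+y_{n+1}^2$ of $\rn^{n+1}$, and correspondingly $\Box\mapsto\Delta$, $-(y,y)_L\mapsto -|y|^2$, so that $D_L\mapsto -D_{\s}$ and $D_L^k\mapsto(-1)^kD_{\s}^k$. Because $h$ is harmonic, hence real-analytic, and $p_r$ is built from powers and $\log$, the lift $\hat F$ extends holomorphically to an open set of $\cn^{n+1}$; carrying out the above substitution on this extension and restricting to a suitable real slice produces a complex-valued function $\hat G$, still homogeneous of degree zero, on an open cone of $\rn^{n+1}\setminus\{0\}$. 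The transformation of the operators gives $D_{\s}^r(\hat G)\equiv 0$ at once, while $D_{\s}^{r-1}(\hat G)\not\equiv 0$ follows from $D_L^{r-1}(\hat F)\not\equiv 0$ by the identity theorem, a real-analytic function not vanishing identically on $U^{n+1}$ not vanishing identically on its holomorphic extension. By the previous paragraph, $\hat G$ descends to a proper $r$-harmonic $f_r$ on an open subset $W$ of $\s^n$.

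It remains to read off $f_r$ explicitly. Under the substitution, the first component $\frac{2\sqrt{-(y,y)_L}}{y_0+y_1}$ of $\Phi$ becomes $\frac{2\sqrt{-|y|^2}}{y_2+i\,y_1}$, and each later component $\frac{2y_k}{y_0+y_1}$, $k\ge 2$, becomes $\frac{2y_{k+1}}{y_2+i\,y_1}$. Absorbing the constant branch factor of $\sqrt{-|y|^2}$ into the analytic extension $p^*_r$ of $p_r$, and writing $h^*$ for a local holomorphic extension of $h$ about a point whose transformed arguments lie in the domain of $h$, the restriction of $\hat G$ to $\s^n$ becomes precisely
$$f_r(y_1,\dots,y_{n+1})=p^*_r\!\left(\frac{2|y|}{y_2+i\,y_1}\right)\cdot h^*\!\left(\frac{2y_3}{y_2+i\,y_1},\dots,\frac{2y_{n+1}}{y_2+i\,y_1}\right),$$
the asserted function.

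The $r$-harmonicity itself transfers formally through the two harmonic-morphism identities and the substitution, so the real work is the bookkeeping of domains and branches that makes $W$ a genuine nonempty open subset of $\s^n$ and the extensions $p^*_r,h^*$ well defined. The substitution sends the real hyperboloid not to the real sphere but to an imaginary slice, so $W$ must be chosen to avoid the zero set of $y_2+i\,y_1$, to lie where the extension $p^*_r$ (multivalued, since $\log$ occurs) is single-valued, and to lie where the rational arguments of $h$ stay within a polydisc centred at a point of $\rn^{n-1}$ on which the power series of $h$ converges. Verifying that such a $W$ exists, and that $D_{\s}^{r-1}(\hat G)$ genuinely does not vanish there, is the one point requiring care; everything else is a routine translation of Theorem~\ref{theo:hyperboloid} through \cite{Gud-Sve-1}.
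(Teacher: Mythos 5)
Your argument is correct and follows essentially the same route as the paper: the paper's proof simply combines Theorem \ref{theo:hyperboloid}, Proposition \ref{prop:harmonic-morphism-Sn} and the duality principle of \cite{Gud-Sve-1} (Theorem 7.1), cited as a black box. What you have done is unpack that duality principle explicitly --- lifting to the cones via the two harmonic morphisms, the degree-zero homogeneity, and the substitution $(y_0,y_1,\dots,y_n)\mapsto(i\,y_1,y_2,\dots,y_{n+1})$ carrying $\Box$ to $\Delta$ together with analytic continuation and the identity theorem for the properness --- which is precisely the mechanism behind the cited result, so no genuinely different ideas are involved.
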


\begin{proof}
This is a direct consequence of Theorem \ref{theo:hyperboloid}, Proposition
\ref{prop:harmonic-morphism-Sn} and a general duality principle for $r$-harmonic
functions on spheres $\s^n$ and hyperbolic spaces $\hyp^n$.  For this see either
Theorem 7.1 of \cite{Gud-Sve-1} or Theorem 8.1 in \cite{Gud-Mon-Rat-1}.
\end{proof}

\end{document}